\newtheorem{theorem}{Theorem}[section]
\newtheorem{lemma}[theorem]{Lemma}
\newtheorem{proposition}[theorem]{Proposition}
\newtheorem{corollary}[theorem]{Corollary}
\newtheorem{remark}[theorem]{Remark}
\newtheorem{question}[theorem]{Question}
\newcommand{\RR}{\mathbb{R}}
\newcommand{\FF}{\mathbb{F}}
\newcommand{\V}{\Vert}
\begin{document}

\title[Restriction theorem for general measures]{Finite field analogue of restriction theorem for general measures}

\author{Changhao Chen}

\address{School of Mathematics and Statistics, The University of New South Wales, Sydney NSW 2052, Australia }
\email{changhao.chenm@gmail.com}


\subjclass[2010]{42B05, 05B25}


\begin{abstract}
We study restriction problem in vector spaces over finite fields. We obtain finite field analogue of  Mockenhaupt-Mitsis-Bak-Seenger  restriction theorem, and we show that the range of the exponentials is sharp. 
\end{abstract}


\maketitle

\section{Introduction}

Mockenhaupt and Tao \cite{MT} initially studied the finite field restriction problem.  We obtain finite field analogue of  Mockenhaupt-Mitsis-Bak-Seenger  restriction theorem. Furthermore,  we show that the range of the exponentials are sharp for the finite fields setting.

We first introduce the restriction problem in Euclidean spaces. We refer to Mattila \cite[Chapter 19]{Mattila}, Tao \cite{Tao1}, Wolff \cite[Chapter 7]{Wolff} for more details. Let $f:\RR^{n}\rightarrow \mathbb{C}$ be a function and $\mu$ be a Borel measure on $\RR^{n}$. For $\xi\in \RR^{n}$, define 
\[
\widehat{f\mu}(\xi)=\int e^{-2\pi i x\cdot\xi}f(x)d \mu (x).
\]
The restriction problem determinates the values $p, q$ such that 
\begin{equation*}
\Vert\widehat{f \mu}\Vert_{L^{q}(\RR^{n})}\lesssim_{p,q}\Vert f\Vert_{L^p(\mu)}, \forall f\in L^{p}(\mu).
\end{equation*}
 
We write $X \lesssim Y$ if there is a positive constant $C$ such that $X\leq C Y$, $X\gtrsim Y$ if $Y\lesssim X$, and $X\approx Y$ if $X\lesssim Y$ and $X\gtrsim Y$. Furthermore $X\lesssim_{t}Y$ means that the constant depends on $t$ only.

Mockenhaupt \cite{Mockenhaupt}, Mitsis \cite{Mitsis} independently proved the following Theorem \ref{thm:MM} for the range $q>q_{n, \alpha, \beta}=(4n-4\alpha+2\beta)/\beta$. The endpoint case $q=q_{n, \alpha, \beta}$   was proved by Bak and Seeger \cite{BakSeeger}. We note that if $\mu$ is the nature measure on sphere, then $\alpha=\beta=\frac{n-1}{2}$ and Theorem \ref{thm:MM} becomes the classical Stein-Tomas restriction theorem. We refer to \cite[Chapter 19]{Mattila} for more details.

\begin{theorem}\label{thm:MM}
Let $\mu $ be a finite Borel measure on $\RR^{n}$ such that
\begin{equation}\label{eq:A0}
\mu(B(x,r))\lesssim r^{\alpha}, \, \forall x\in \RR^{n}, \, r>0, 
\end{equation}
and 
\begin{equation}\label{eq:B0}
|\widehat{\mu}(\xi)| \lesssim (1+|\xi|)^{-\beta/2}, \, \forall \xi \in \RR^{n}, 
\end{equation}
 for some $0<\alpha, \beta <n$. Then for any $q\geq (4n-4\alpha+2\beta)/\beta$, we have
\begin{equation}\label{eq:C0}
\V \widehat{f\mu}\V_{L^q(\RR^{n})}\lesssim_{q, \alpha, \beta}\V f\V_{L^{2}(\mu)}, \, \forall f\in L^{2}(\mu).
\end{equation}
\end{theorem}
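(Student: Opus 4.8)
The plan is to run the classical $TT^{*}$ argument together with a Littlewood--Paley decomposition of $\widehat{\mu}$, proving the estimate first for $q>q_{0}:=(4n-4\alpha+2\beta)/\beta$ and then dealing with the endpoint $q=q_{0}$ separately. Write $Tf=\widehat{f\mu}$; since $\mu$ is finite and $f\in L^{2}(\mu)\subset L^{1}(\mu)$, the inequality \eqref{eq:C0} is exactly the boundedness of $T\colon L^{2}(\mu)\to L^{q}(\RR^{n})$. I would first reduce to a convolution estimate via the identity $\V T\V_{L^{2}(\mu)\to L^{q}}^{2}=\V TT^{*}\V_{L^{q'}\to L^{q}}$: a direct computation gives $T^{*}g=\check g$ restricted to $\operatorname{supp}\mu$ and $TT^{*}g=\widehat{\mu}*g$, so it suffices to prove $\V\widehat{\mu}*g\V_{L^{q}(\RR^{n})}\lesssim_{q,\alpha,\beta}\V g\V_{L^{q'}(\RR^{n})}$ for $q\ge q_{0}$.

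Next I would decompose $\widehat{\mu}$ into dyadic frequency blocks. Fix a smooth partition of unity $1=\sum_{j\ge0}\psi_{j}$ with $\psi_{0}$ supported near the origin and $\operatorname{supp}\psi_{j}\subset\{|\xi|\approx2^{j}\}$ for $j\ge1$, and set $K_{j}=\psi_{j}\widehat{\mu}$, so $\widehat{\mu}*g=\sum_{j\ge0}K_{j}*g$. The Fourier-decay hypothesis \eqref{eq:B0} gives $\V K_{j}\V_{L^{\infty}}\lesssim2^{-j\beta/2}$, hence the $L^{1}\to L^{\infty}$ bound $\V K_{j}*g\V_{L^{\infty}}\lesssim2^{-j\beta/2}\V g\V_{L^{1}}$. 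For an $L^{2}\to L^{2}$ bound I would use Plancherel, $\V K_{j}*g\V_{L^{2}}\le\V\widehat{K_{j}}\V_{L^{\infty}}\V g\V_{L^{2}}$, and the formula $\widehat{K_{j}}(\eta)=\int\widehat{\psi_{j}}(x+\eta)\,d\mu(x)$: splitting this integral over the dyadic shells $\{x:|x+\eta|\approx2^{\ell-j}\}$ and combining the rapid decay of $\widehat{\psi_{j}}$ with the ball condition \eqref{eq:A0} yields $\V\widehat{K_{j}}\V_{L^{\infty}}\lesssim2^{j(n-\alpha)}$, and therefore $\V K_{j}*g\V_{L^{2}}\lesssim2^{j(n-\alpha)}\V g\V_{L^{2}}$.

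I would then interpolate (Riesz--Thorin) between these two bounds with $\theta=2/q$ to obtain, for $2\le q<\infty$,
\[
\V K_{j}*g\V_{L^{q}}\ \lesssim\ 2^{\,j\left(\frac{2n-2\alpha+\beta}{q}-\frac{\beta}{2}\right)}\V g\V_{L^{q'}}.
\]
The exponent is strictly negative precisely when $q>q_{0}$, so in that range the geometric series in $j$ converges and summing gives $\V\widehat{\mu}*g\V_{L^{q}}\lesssim_{q,\alpha,\beta}\V g\V_{L^{q'}}$, which by the $TT^{*}$ reduction proves \eqref{eq:C0} for every $q>q_{0}$; this is the Mockenhaupt--Mitsis part and is essentially routine once the two block estimates are in place.

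The endpoint $q=q_{0}$ will be the main obstacle: there the exponent above vanishes, the blocks $K_{j}*$ are only uniformly bounded on $L^{q_{0}'}\to L^{q_{0}}$, and the crude triangle-inequality summation loses a logarithm. This is exactly the point that forces the Bak--Seeger refinement. Here the plan is to sum the blocks by real interpolation instead: feeding the whole Riesz--Thorin family of bounds for $K_{j}*$ on either side of the critical line into a Bourgain-type summation lemma produces a Lorentz-space estimate $\widehat{\mu}*\colon L^{q_{0}',2}\to L^{q_{0},2}$, after which the $L^{2}(\mu)$-side duality $\V T^{*}g\V_{L^{2}(\mu)}^{2}=\langle\widehat{\mu}*g,\,g\rangle\le\V\widehat{\mu}*g\V_{L^{q_{0},2}}\V g\V_{L^{q_{0}',2}}$ upgrades it to $T^{*}\colon L^{q_{0}',2}\to L^{2}(\mu)$, and dualizing back gives $T\colon L^{2}(\mu)\to L^{q_{0},2}\hookrightarrow L^{q_{0}}$, which is \eqref{eq:C0} at the endpoint. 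The delicate part is the careful bookkeeping of Lorentz indices in the summation so that the final conclusion is a genuine strong-type $L^{q_{0}}$ estimate rather than a weak-type one; everything else is a variant of the Stein--Tomas scheme.
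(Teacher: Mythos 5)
Your outline is the standard Mockenhaupt--Mitsis argument for $q>q_{0}$ together with the Bak--Seeger endpoint refinement, and it is correct as far as it goes; note, however, that the paper does not prove Theorem \ref{thm:MM} at all --- it is quoted from \cite{Mockenhaupt}, \cite{Mitsis} and \cite{BakSeeger} --- so there is no in-paper proof to compare against. The argument the paper does give, for the finite field analogue (Theorem \ref{thm:main}), is exactly your non-endpoint scheme collapsed to a single scale: the kernel $K=\mu^{\vee}-\delta_{0}$ plays the role of your entire sum $\sum_{j}K_{j}*$, the $L^{1}\to L^{\infty}$ and $L^{2}\to L^{2}$ bounds for $f\mapsto f*K$ come from the conditions \eqref{eq:B} and \eqref{eq:A} respectively (mirroring your use of \eqref{eq:B0} and \eqref{eq:A0}), and a single application of Riesz--Thorin finishes the proof --- there is no dyadic summation and hence no endpoint difficulty in the finite field setting. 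The one soft spot in your write-up is the endpoint $q=q_{0}$: the off-the-shelf Bourgain summation lemma applied to the family $\{K_{j}*\}$ only yields a restricted weak-type bound at the critical exponent, and upgrading that to the strong $L^{q_{0}',2}\to L^{q_{0},2}$ convolution estimate you invoke is precisely the nontrivial content of \cite{BakSeeger}; your sketch correctly flags this as the delicate step but does not supply it, so that portion should be regarded as cited rather than proved.
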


One may ask whether this is still true for some $q<q_{n, \alpha, \beta}$. This is unknown for general $0<\alpha, \beta<n$. For $0<\alpha=\beta<1$, Hambrook and  Ł$\L$aba \cite{HambrookLaba} proved that the range $q\geq q_{1, \alpha, \alpha}$ is sharp.  By modifying the construction of Hambrook and $\L$aba \cite{HambrookLaba},  Chen \cite{ChenX} proved that the range 
is sharp for any $0<\alpha\leq \beta <1$. For $n\geq 2$, Hambrook and $\L$aba \cite{HambrookLabaB} proved that the range of $q$  is sharp when $n-1<\beta\leq \alpha<n$. We summarise the known results in the following.

%

\begin{theorem}
For any $n-1<\beta\leq \alpha<n$ there exists a Borel probability measure $\mu$ on $\RR$ with compact support such that  $\mu$  satisfies   \eqref{eq:A0} and \eqref{eq:B0}, but the  estimate \eqref{eq:C0} fails for any $q<q_{n,\alpha, \beta}$.  
\end{theorem}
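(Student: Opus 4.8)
The plan is to construct $\mu$ as a randomized Cantor-type measure on a cube in $\RR^{n}$ carrying a ``planted'' arithmetic structure, following the higher-dimensional sharpness construction of Hambrook and \L aba \cite{HambrookLabaB} (and, in dimension one, Chen \cite{ChenX}). Fix a large integer base $M$ and a decreasing family $[0,1]^{n}=E_{0}\supset E_{1}\supset\cdots$, with $E_{j}$ a union of side-$M^{-j}$ cubes of the base-$M$ grid: at an \emph{ordinary} level $j$ one keeps, inside each surviving level-$(j-1)$ cube, a uniformly random subfamily of $N\approx M^{\gamma}$ of its $M^{n}$ children; at a sparse sequence of \emph{special} levels $j_{1}<j_{2}<\cdots$ with $j_{k+1}/j_{k}\to\infty$ one instead keeps a deterministic subfamily that is a Cartesian product of arithmetic progressions inside $\{0,\dots,M-1\}^{n}$. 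Put $\mu=\lim_{j}\mu_{j}$, with $\mu_{j}$ the normalized Lebesgue measure on $E_{j}$. The parameters --- the base $M$, the keep-exponent $\gamma$, the lengths and common differences of the planted progressions, and the lacunarity of $(j_{k})$ --- are chosen so that the Frostman exponent of $\mu$ equals $\alpha$, its Fourier-decay exponent equals $\beta$, and the planted structure is strong enough to defeat \eqref{eq:C0} all the way up to $q_{n,\alpha,\beta}$; the hypothesis $n-1<\beta\le\alpha<n$ is precisely what leaves room to meet these three demands at once.

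The next step is to verify \eqref{eq:A0} and \eqref{eq:B0}. Condition \eqref{eq:A0} with exponent $\alpha$ follows from a union bound over dyadic scales: a surviving level-$j$ cube carries mass $\approx M^{-\sum_{i\le j}(\text{keep-exponent at level }i)}$, and since the special levels are so sparse they perturb this exponent sum negligibly, so $\mu(B(x,r))\lesssim r^{\alpha}$ holds uniformly, almost surely (with a small margin built in by taking $\gamma$ slightly above $\alpha$ where required). For \eqref{eq:B0} one expands $\widehat{\mu}(\xi)$ as a telescoping product with one factor per level; when $|\xi|\approx M^{k}$ only the first $\approx k$ factors are non-trivial. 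Each ordinary-level factor is a normalized random exponential sum over the kept children, bounded by $\lesssim N^{-1/2}$ almost surely by a Khintchine-type large-deviation estimate, taken uniformly over the $M^{O(n)}$ relevant frequencies at that scale and then summed over scales via Borel--Cantelli; each special-level factor contributes only a bounded multiplicative constant (a fixed power of $M$), which by the lacunarity of $(j_{k})$ does not affect the power of $|\xi|$. Multiplying the factors gives $|\widehat{\mu}(\xi)|\lesssim(1+|\xi|)^{-\beta/2}$.

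To see that \eqref{eq:C0} fails for every $q<q_{n,\alpha,\beta}$, fix a special level $j_{k}$ and a surviving level-$(j_{k}-1)$ cube $Q$. Rescaled to unit size, $\mu|_{Q}$ is a fixed scaled copy of the planted ``flat'' model (a product of arithmetic-progression measures, refined at finer scales by the ordinary construction), so $\widehat{\mathbf{1}_{Q}\mu}$ is $\gtrsim\mu(Q)$ on a frequency region $\Xi_{k}\subset\RR^{n}$ whose side lengths are the reciprocals of the planted structure at scale $M^{-j_{k}}$. Taking $f=f_{k}:=\mathbf{1}_{Q}$, so that $\V f_{k}\V_{L^{2}(\mu)}=\mu(Q)^{1/2}$ (the case $Q=[0,1]^{n}$, $f_{k}\equiv1$, being a degenerate subcase that already suffices in part of the range), we get
\begin{equation*}
\frac{\V\widehat{f_{k}\mu}\V_{L^{q}(\RR^{n})}}{\V f_{k}\V_{L^{2}(\mu)}}\;\gtrsim\;\frac{\mu(Q)\,|\Xi_{k}|^{1/q}}{\mu(Q)^{1/2}}\;=\;\mu(Q)^{1/2}\,|\Xi_{k}|^{1/q}.
\end{equation*}
Writing $\mu(Q)$ and $|\Xi_{k}|$ in terms of $M$, $j_{k}$ and the parameters, the right-hand side tends to $\infty$ as $k\to\infty$ exactly when $q<q_{n,\alpha,\beta}$; the lengths and common differences of the planted progressions are arranged so that this threshold is precisely $(4n-4\alpha+2\beta)/\beta$.

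The step I expect to be the main obstacle is the Fourier-decay bound: establishing $|\widehat{\mu}(\xi)|\lesssim(1+|\xi|)^{-\beta/2}$ uniformly in every direction --- in particular along coordinate subspaces, where the flat/product pieces of the measure decay slowly --- while keeping the special levels potent enough to break \eqref{eq:C0} for the \emph{entire} range $n-1<\beta\le\alpha<n$. The large-deviation estimate has to be uniform in $\xi$ and stable under insertion of the structured levels, and the parameters (base, keep-exponent, progression geometry, lacunarity) must be solved simultaneously against the Frostman condition, the decay condition, and the target exponent. A final point, routine by comparison, is to pass from the almost-sure statements to a single deterministic $\mu$ by fixing any realization on which all the bounds hold.
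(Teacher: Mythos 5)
You should first be aware that the paper does not prove this theorem: it is stated as a summary of known Euclidean results and attributed to Hambrook and {\L}aba \cite{HambrookLabaB} (with the one-dimensional cases in \cite{HambrookLaba} and \cite{ChenX}), so there is no in-paper proof to compare against. Your outline does follow the strategy of those references --- a random Cantor-type measure with arithmetic progressions planted at a lacunary sequence of special levels --- and it is also the continuous counterpart of the paper's own finite-field construction (Theorem \ref{thm:construction}), which combines a random Salem-type set with the progression $\{1,\dots,N\}^{n}$. So the approach is the right one.

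That said, as a proof your text has genuine gaps exactly at the points that carry the difficulty, and you have named them without closing them. First, the claim that each special-level factor in the telescoping product for $\widehat{\mu}(\xi)$ ``contributes only a bounded multiplicative constant'' is not correct as stated: the Fourier transform of a measure concentrated on a product of arithmetic progressions has size comparable to $1$ on an entire dual progression of frequencies, so at those frequencies the special level contributes no decay at all, and one must show that the accumulated random decay from the ordinary levels below $j_{k}$ already yields $(1+|\xi|)^{-\beta/2}$ there; this is where the lacunarity $j_{k+1}/j_{k}\to\infty$ and the restriction $n-1<\beta$ enter quantitatively, and it is the heart of \cite{HambrookLabaB}. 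Second, in the lower bound you assert $|\widehat{f_{k}\mu}|\gtrsim\mu(Q)$ on a region $\Xi_{k}$ and then need $|\Xi_{k}|$ to be of size roughly $\mu(Q)^{-q_{n,\alpha,\beta}/2}$ for the ratio $\mu(Q)^{1/2}|\Xi_{k}|^{1/q}$ to blow up precisely below the critical exponent; nothing in the sketch pins down the progression lengths and common differences that make this hold simultaneously with the Frostman exponent $\alpha$ and the decay exponent $\beta/2$, and solving that system of constraints (and checking that it is solvable exactly in the stated range of $\alpha,\beta$) is the other half of the work. Until those two computations are carried out, what you have is a correct plan rather than a proof.
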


Now we turn to the finite field setting.  The finite field version of restriction problem has already received a fair amount of attention, see  \cite{IosevichKohp}, \cite{IosevichKohs}, \cite{Lewko}, \cite{LewkoLewko}.

Let $\FF$ be a finite field, and $\FF^{n}$ be the $n$-dimensional vector space over this field. In this paper we will restrict our interest on prime fields only, which means that $|\FF|$  is always some (large) prime number.

We call $\mu$  a  measure on $\FF^{n}$ if $\mu(x) \geq 0$  for all $x\in \FF^{n}$. Let $f : \FF^{n}\longrightarrow \mathbb{C}$ be a complex value function.   The Fourier transform of $f\mu$ is defined as 
\begin{equation}\label{eq:ddd}
\widehat{f \mu}(\xi)=\sum_{x}e(-x\cdot \xi) f(x)\mu(x)
\end{equation}
where $e(-x \cdot \xi)=e^{-\frac{2\pi i x\cdot \xi}{|\FF|}}$ and the dot product 
\[
 x\cdot\xi =x_1\xi_1+\cdots +x_n\xi_n\,(\text{mod}\, |\FF|).
\] 
Here and in what follows $\sum_{z}$ means $\sum_{z\in \FF^{n}}$. Furthermore we define 
\[
\V f \V_{L^{q} (\FF^{n})}=\left(\sum_{x} |f(x)|^{q} \right)^{\frac{1}{q}} \text{ and } \V f \V_{L^{q} (\mu)}=\left(\sum_{x} |f(x)|^{q} \mu(x) \right)^{\frac{1}{q}}.
\]
For any $1\leq p, q \leq \infty$ let $R^{*}(p\rightarrow q)=R^{*}_{\mu}(p\rightarrow q)$ be the minimal constant such that 
\begin{equation}\label{eq:dd}
\V \widehat{f \mu} \V_{L^{q} (\FF^{n})}\leq R^{*}(p\rightarrow q) \V f \V_{L^{p} (\mu)}  
\end{equation}
hold for all function $f$ on the support of $\mu$. 

The finite field analogue of restriction problem determinates the values $p, q$ such that  $R^{*}(p\rightarrow q)\lesssim 1$, where the constant does not depend on $|\FF|$. 

We note that in the former research the measure $\mu$ will be some normalized ``surface measure" with support $S$, where $S$ will often be an algebraic variety. For instance $S$ is finite field version of  paraboloid, or sphere.  We refer to  \cite{IosevichKohp}, \cite{IosevichKohs}, \cite{Lewko}, \cite{LewkoLewko} for more details.

By adapting the argument of Mockenhaupt and Tao \cite{MT}, Green \cite{Green},  we have the following result for general measures on $\FF^{n}$.  Recall that $q_{n, \alpha, \beta}=(4n-4\alpha+2 \beta)/ \beta$.

\begin{theorem}\label{thm:main}
Let $\mu$ be a probability measure on $\FF^{n}$ such that 
\begin{equation}\label{eq:A}
\mu(x)\lesssim |\FF|^{-\alpha}, \forall x\in \FF^{n}
\end{equation}
and 
\begin{equation}\label{eq:B}
| \widehat{\mu}(\xi)| \lesssim |\FF|^{-\beta/2}, \forall \xi\neq 0
\end{equation}
for some $0<\alpha, \beta <n$. Then $R^{*}(2\rightarrow q)\lesssim 1$ for any $q\geq q_{n, \alpha, \beta}$.
\end{theorem}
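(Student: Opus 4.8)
The plan is to carry out the Stein--Tomas $TT^{*}$ argument in the finite field setting, in the spirit of Mockenhaupt--Tao \cite{MT} and Green \cite{Green}. Let $E\colon L^{2}(\mu)\to L^{q}(\FF^{n})$ be the extension operator $Ef=\widehat{f\mu}$, so that $R^{*}(2\to q)=\V E\V$. Since $L^{2}(\mu)$ is a Hilbert space, $\V E\V^{2}=\V EE^{*}\V_{L^{q'}(\FF^{n})\to L^{q}(\FF^{n})}$, where $q'=q/(q-1)$, so it is enough to bound $EE^{*}$. Unwinding the definition \eqref{eq:ddd}, one finds that $E^{*}g$ is (the restriction to the support of $\mu$ of) $x\mapsto\widehat{g}(-x)$, and consequently $EE^{*}g=g*\widehat{\mu}$, convolution on $\FF^{n}$. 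Thus the theorem reduces to the convolution inequality
\[
\V g*\widehat{\mu}\V_{L^{q}(\FF^{n})}\lesssim\V g\V_{L^{q'}(\FF^{n})},\qquad q\ge q_{n,\alpha,\beta}.
\]

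The next step is to peel off the frequency origin. As $\mu$ is a probability measure, $\widehat{\mu}(0)=1$, so I would write $\widehat{\mu}=\delta_{0}+K$, where $\delta_{0}$ is the indicator of $\{0\}$ and $K=\widehat{\mu}\cdot\mathbf{1}_{\FF^{n}\setminus\{0\}}$. The $\delta_{0}$ part is trivial: $g*\delta_{0}=g$ and $\V g\V_{L^{q}}\le\V g\V_{L^{q'}}$ since $q\ge 2\ge q'$ and we are working with counting measure. For the operator $Tg=g*K$ I would establish two endpoint estimates. First, \eqref{eq:B} gives $\V Tg\V_{L^{\infty}}\le\V K\V_{\infty}\V g\V_{L^{1}}\lesssim|\FF|^{-\beta/2}\V g\V_{L^{1}}$. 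Second, a short computation with Fourier inversion yields $\widehat{K}(x)=|\FF|^{n}\mu(-x)-1$, whence \eqref{eq:A} gives $\V\widehat{K}\V_{\infty}\lesssim|\FF|^{n-\alpha}$, and Plancherel on $\FF^{n}$ then gives $\V Tg\V_{L^{2}}\lesssim|\FF|^{n-\alpha}\V g\V_{L^{2}}$.

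Finally I would interpolate. Riesz--Thorin applied to $T$ between the $L^{1}\to L^{\infty}$ and $L^{2}\to L^{2}$ bounds produces, for each $\theta\in(0,1)$, a bound $\V Tg\V_{L^{q}}\lesssim|\FF|^{(n-\alpha)\theta-\beta(1-\theta)/2}\V g\V_{L^{q'}}$ with $q=2/\theta$. The exponent of $|\FF|$ is nonpositive exactly when $\theta\le\beta/(2n-2\alpha+\beta)$, that is, when $q=2/\theta\ge q_{n,\alpha,\beta}$; note $q_{n,\alpha,\beta}>2$ because $\alpha<n$, so this range is nonempty, and the remaining case $q=\infty$ is immediate from Cauchy--Schwarz. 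Adding back the $\delta_{0}$ contribution gives $\V g*\widehat{\mu}\V_{L^{q}}\lesssim\V g\V_{L^{q'}}$ for all $q\ge q_{n,\alpha,\beta}$, hence $R^{*}(2\to q)\lesssim1$.

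I do not anticipate a real obstacle: this is essentially a transcription of the Euclidean Stein--Tomas proof, and the finite field case is if anything easier, since discreteness eliminates any dyadic decomposition of $\widehat{\mu}$ and the ``low frequency'' part shrinks to the single point $\xi=0$. The one place demanding care is the bookkeeping of the finite-field Fourier normalization --- the powers of $|\FF|^{n}$ in Plancherel and in the formula for $\widehat{K}$ --- together with the observation that it is precisely the removal of $\xi=0$ that lets the $L^{1}\to L^{\infty}$ estimate capture the full decay $|\FF|^{-\beta/2}$ of \eqref{eq:B}.
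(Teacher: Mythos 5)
Your proposal is correct and follows essentially the same route as the paper: the $TT^{*}$ reduction to the convolution inequality $\V g*\widehat{\mu}\V_{L^{q}}\lesssim\V g\V_{L^{q'}}$ is exactly the paper's dualization-plus-Cauchy--Schwarz step in operator language, and the decomposition $\widehat{\mu}=\delta_{0}+K$, the two endpoint bounds $|\FF|^{-\beta/2}$ for $L^{1}\to L^{\infty}$ and $|\FF|^{n-\alpha}$ for $L^{2}\to L^{2}$, and the Riesz--Thorin interpolation all match the paper's proof. The only cosmetic difference is that the paper convolves with $\mu^{\vee}-\delta_{0}$ rather than $\widehat{\mu}-\delta_{0}$, which is just a reflection.
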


In the following, we show that the range $q\geq q_{n, \alpha, \beta}$ in Theorem \ref{thm:main} is sharp for any $0<\alpha \leq \beta<n$. We construct the measure by ``combing" a random set and an arithmetic progression, which was inspired by the construction  of  Hambrook and  Ł$\L$aba \cite{HambrookLaba}, Chen \cite{ChenX}. 

 It is worth pointing out that it is unknown whether  the range $q\geq q_{n, \alpha, \beta}$ is sharp for general $0<\alpha\leq \beta <n$ for Euclidean setting.

\begin{theorem}\label{thm:sharpness}
Let $0<\beta<\alpha<n$.  Then  there exists a probability measure $\mu$ on $\FF^{n}$  such that $\mu$ satisfies   \eqref{eq:A}, \eqref{eq:B}, and if $q<q_{n, \alpha, \beta}$ then there is $\tau=\tau_{n, \alpha, \beta, q}>0$ such that  $R^{*}(2\rightarrow q)\gtrsim |\FF|^{\tau}$.
\end{theorem}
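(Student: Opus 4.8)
The plan is to construct $\mu$ as a normalized indicator of a carefully chosen set $E \subseteq \FF^n$, built by "combining" a random set with a structured arithmetic-progression-like set, exactly in the spirit of Hambrook--\L aba and Chen. Concretely, I would work in the first coordinate: fix parameters and let $E = A \times \FF^{n-1}$ where $A \subseteq \FF$ is chosen so that $|A| \approx |\FF|^{1-(\alpha-\beta)/1}$... more precisely one wants the ball/ball-counting condition \eqref{eq:A} to reflect dimension $\alpha$ and the Fourier decay \eqref{eq:B} to reflect the smaller exponent $\beta$. The mechanism is that a \emph{random} subset of $\FF$ of the appropriate density has square-root-cancellation Fourier coefficients (giving the decay $\beta$), while \emph{inserting a full arithmetic progression} of length $\approx |\FF|^{\beta}$ forces $|E|$ up to size $\approx |\FF|^{\alpha}$ (matching \eqref{eq:A}) without destroying the decay, because the Fourier transform of an AP is supported on a small dual subgroup/progression and is easy to control. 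So the first step is to pin down the two parameters (density of the random part, length of the AP) from $\alpha$ and $\beta$, and verify \eqref{eq:A} and \eqref{eq:B} hold for the resulting $\mu = \mathbf{1}_E / |E|$; the random part requires a standard large-deviation (Chernoff/Hoeffding) estimate together with a union bound over the $|\FF|^n$ frequencies $\xi$, and one should take $|\FF|$ large.

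Next I would test the restriction inequality \eqref{eq:dd} against a concrete function to extract the lower bound on $R^*(2\to q)$. The natural choice is $f \equiv 1$ on the support of $\mu$, so that $\V f\V_{L^2(\mu)} = 1$ and $\widehat{f\mu} = \widehat{\mu}$; then $R^*(2\to q) \ge \V\widehat{\mu}\V_{L^q(\FF^n)}$. The point is that the arithmetic-progression factor produces a \emph{large set of frequencies} $\xi$ on which $|\widehat{\mu}(\xi)|$ is abnormally big — essentially the dual progression, of size comparable to $|\FF|/(\text{AP length})$ — and summing $|\widehat{\mu}(\xi)|^q$ over that dual set gives a quantity that beats $1$ by a power $|\FF|^{\tau}$ precisely when $q < q_{n,\alpha,\beta}$. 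I would also compute $\V\widehat{\mu}\V_{L^q}$ coming from the "trivial" contribution at $\xi = 0$ (which is $|\FF|^{n/q}$ after accounting for the $(n-1)$ free coordinates) and from the random bulk, and check that the dominant term over the relevant range is the dual-AP term, with the crossover exponent matching $q_{n,\alpha,\beta}$ exactly. Chasing the exponents here is the bookkeeping that produces the explicit $\tau = \tau_{n,\alpha,\beta,q}$.

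The main obstacle I anticipate is not the random part (that is routine Chernoff plus union bound) but rather arranging the structured part so that it \emph{simultaneously} (i) pushes $|E|$ up to the $\alpha$-threshold, (ii) leaves the Fourier decay at the $\beta$-level intact for all $\xi \ne 0$ outside the dual progression, and (iii) creates a dual set large enough and with $|\widehat\mu|$ large enough to violate \eqref{eq:dd} by a genuine power of $|\FF|$ for every $q < q_{n,\alpha,\beta}$. In the Euclidean constructions this balancing is delicate because one iterates across scales; in the finite-field prime setting there is only "one scale", which should make the construction cleaner — a single random set unioned with (or Cartesian-producted against) a single AP — but one must be careful that the prime modulus has no nontrivial subgroups, so the "arithmetic progression" must be a genuine AP $\{0, d, 2d, \dots\}$ rather than a subgroup, and its Fourier transform is a Dirichlet-kernel-type sum whose $\ell^\infty$ and support properties need a short direct estimate. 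A secondary technical point is the restriction $\beta < \alpha$ in the hypothesis (as opposed to $\beta \le \alpha$): when $\alpha = \beta$ the AP has length comparable to the whole random density and the construction degenerates, so the strict inequality is what gives room to separate the two exponents. Once the set is built and these three properties verified, plugging $f \equiv 1$ into \eqref{eq:dd} finishes the proof.
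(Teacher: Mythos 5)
Your overall blueprint --- a random set combined with an arithmetic-progression-like piece that creates a large ``dual'' set of frequencies, in the spirit of Hambrook--{\L}aba and Chen --- is indeed the mechanism the paper uses, but the proposal has a gap that is fatal for the stated range of $q$: the choice of test function $f\equiv 1$. With $f\equiv 1$ you are lower-bounding $R^{*}(2\to q)$ by $\Vert\widehat{\mu}\Vert_{L^{q}(\FF^{n})}$, and this quantity provably cannot grow like a power of $|\FF|$ on the whole range $q<q_{n,\alpha,\beta}$, no matter how the measure is built: by Plancherel and \eqref{eq:A}, $\sum_{\xi}|\widehat{\mu}(\xi)|^{2}=|\FF|^{n}\sum_{x}\mu(x)^{2}\lesssim|\FF|^{n-\alpha}$, so together with \eqref{eq:B} one gets, for $q\geq 2$, $\sum_{\xi\neq 0}|\widehat{\mu}(\xi)|^{q}\lesssim|\FF|^{-\beta(q-2)/2}\,|\FF|^{n-\alpha}$, which is $O(1)$ as soon as $q\geq(2n-2\alpha+2\beta)/\beta$. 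Since $\alpha<n$, this threshold is strictly smaller than $q_{n,\alpha,\beta}=(4n-4\alpha+2\beta)/\beta$, so the $f\equiv 1$ test necessarily misses the upper part of the claimed range. The paper instead tests with $f={\bf 1}_{A}$, the indicator of the structured box $A=\{1,\dots,N\}^{n}$: one still has $|\widehat{f\mu}(\xi)|\gtrsim|\FF|^{-\beta/2}$ on the Bohr set $A^{*}$ of size $\gtrsim(|\FF|/N)^{n}$, but now $\Vert f\Vert_{L^{2}(\mu)}^{2}=\mu(A)\approx|A|/|E|\approx|\FF|^{-\beta/2}$, and this small denominator supplies the extra factor $|\FF|^{\beta/4}$ that doubles the admissible range and yields sharpness up to $q_{n,\alpha,\beta}$ exactly.

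A secondary problem is that your parameters are reversed. In the paper's construction the random set $E$ carries the mass, $|E|\approx|\FF|^{\alpha}$, which is what makes \eqref{eq:A} hold; its square-root-cancellation bound puts its Fourier contribution at level $|\FF|^{-\alpha/2}$, which is better than needed. The box $A$ has $|A|=N^{n}\approx|\FF|^{\alpha-\beta/2}$, and it is the ratio $|A|/|E|\approx|\FF|^{-\beta/2}$ that pins the Fourier decay at the $\beta$ level. An AP of length $\approx|\FF|^{\beta}$ inserted into the random set cannot ``force $|E|$ up to $|\FF|^{\alpha}$'' when $\beta<\alpha$; and if the structured piece were the larger of the two, its normalized Fourier transform would be $\approx 1$ on the dual set, destroying \eqref{eq:B} altogether. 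Note also that $\mu$ is the normalized sum of ${\bf 1}_{E}$ and ${\bf 1}_{A}$, not a Cartesian product $A\times\FF^{n-1}$. With the parameters set as above and the test function changed to ${\bf 1}_{A}$, the exponent count gives $R^{*}(2\to q)\gtrsim|\FF|^{(4n-4\alpha+2\beta-q\beta)/(4q)}$, which is positive exactly when $q<q_{n,\alpha,\beta}$.
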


For the remaining case $0<\alpha=\beta<n$ we have the following result.

\begin{theorem}\label{thm:sharpnessalpha}
Let $0<\alpha<n$.  Then for any  $0<q< q_{n,\alpha, \alpha}$ there exists a probability measure $\mu$ on $\FF^{n}$  such that $\mu$ satisfies   \eqref{eq:A}, \eqref{eq:B}, and  $R^{*}(2\rightarrow q)\gtrsim |\FF|^{\tau}$ for some $\tau=\tau_{n, \alpha, q}>0$. 
\end{theorem}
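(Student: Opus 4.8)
Write $p=|\FF|$. My plan is to take $\mu$ to be the uniform probability measure on a set $E=P\cup E'\subseteq\FF^{n}$, where $P$ is a short arithmetic progression and $E'$ is a random set disjoint from it: the random part $E'$ will supply the Fourier decay \eqref{eq:B}, while $P$ is the obstruction that forces $R^{*}(2\rightarrow q)$ to be large. This is the finite field analogue of the ``combing'' of random and arithmetic-progression pieces in the Euclidean constructions of Hambrook and \L aba \cite{HambrookLaba} and Chen \cite{ChenX}. Concretely, set $M\approx p^{\alpha}\log p$, let $E'\subseteq\FF^{n}\setminus P$ be obtained by retaining each point independently with probability $\delta=p^{-n}M$, and let $P=\{0,1,\dots,\ell-1\}\times\{0\}^{n-1}$ have length $\ell\approx(M\log p)^{1/2}\approx p^{\alpha/2}\log p$ (when $\alpha\ge2$ one replaces $P$ by a box $\{0,\dots,L-1\}^{k}\times\{0\}^{n-k}$ with $L^{k}\approx p^{\alpha/2}\log p$ and $k>\alpha/2$, possible since $\alpha<n$; nothing below changes). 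Since $0<\alpha<n$ we have $\ell=o(p)$ and $M=o(p^{n})$, so $E$ fits inside $\FF^{n}$, $P\cap E'=\emptyset$, and $|E|=\ell+|E'|\approx M$. If one wants constants free of logarithms in \eqref{eq:A}--\eqref{eq:B}, at the price of letting $\mu$ depend on $q$, take instead $\ell\approx p^{\alpha/2-\delta}$ with $\delta=\delta(q)>0$ chosen at the very end.

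Condition \eqref{eq:A} is immediate: $\mu(x)=|E|^{-1}\lesssim M^{-1}\lesssim p^{-\alpha}$. For \eqref{eq:B} the only non-routine ingredient is the standard probabilistic estimate that, with probability tending to $1$, one has $|E'|\approx M$ and $\bigl|\sum_{x\in E'}e(-x\cdot\xi)\bigr|\lesssim(M\log p)^{1/2}$ for every $\xi\neq0$; this comes from a Chernoff/Bernstein bound for each fixed $\xi$ — the sum is a sum of independent bounded terms whose mean $-\delta\sum_{x\in P}e(-x\cdot\xi)$ has modulus $\le\delta\ell=o((M\log p)^{1/2})$ — followed by a union bound over the $p^{n}$ frequencies, which is exactly what produces the factor $\log p$ inside the square root. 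Fixing such an $E'$ and using the trivial bound $\bigl|\sum_{x\in P}e(-x\cdot\xi)\bigr|\le\ell$, for $\xi\neq0$ we obtain
\[
|\widehat{\mu}(\xi)|\le\frac{1}{|E|}\Bigl(\Bigl|\sum_{x\in P}e(-x\cdot\xi)\Bigr|+\Bigl|\sum_{x\in E'}e(-x\cdot\xi)\Bigr|\Bigr)\lesssim\frac{\ell+(M\log p)^{1/2}}{M}\approx\frac{(M\log p)^{1/2}}{M}\approx p^{-\alpha/2},
\]
the balance $\ell\approx(M\log p)^{1/2}$ being precisely what makes the two contributions comparable; the alternative choice $\ell\approx p^{\alpha/2-\delta}$ gives $|\widehat{\mu}(\xi)|\le C_{n}p^{-\alpha/2}$ with no logarithms.

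For the lower bound on $R^{*}(2\rightarrow q)$ I would test \eqref{eq:dd} against $f=\mathbf{1}_{P}$, so that $\widehat{f\mu}(\xi)=|E|^{-1}\sum_{x\in P}e(-x\cdot\xi)$. The elementary estimate $\bigl|\sum_{t=0}^{\ell-1}e(-t\xi_{1}/p)\bigr|\gtrsim\ell$, valid whenever $\xi_{1}$ lies in an arithmetic progression of length $\approx p/\ell$ around $0$, shows that $|\widehat{f\mu}(\xi)|\gtrsim\ell/|E|$ on a ``dual'' set $P^{*}$ with $|P^{*}|\gtrsim p^{n}/\ell$ (one coordinate constrained, the remaining $n-1$ free). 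Since $\V f\V_{L^{2}(\mu)}^{2}=|P|/|E|=\ell/|E|$, we get
\[
R^{*}(2\rightarrow q)\ge\frac{\V\widehat{f\mu}\V_{L^{q}(\FF^{n})}}{\V f\V_{L^{2}(\mu)}}\gtrsim|P^{*}|^{1/q}\Bigl(\frac{\ell}{|E|}\Bigr)^{1/2}\gtrsim p^{n/q}\,\ell^{\,1/2-1/q}\,M^{-1/2}.
\]
Substituting $\ell\approx p^{\alpha/2}\log p$ and $M\approx p^{\alpha}\log p$ turns the right-hand side into $p^{(2n-\alpha)/(2q)-\alpha/4}$ times a bounded power of $\log p$, and the exponent $(2n-\alpha)/(2q)-\alpha/4$ is strictly positive exactly when $q<(4n-2\alpha)/\alpha=q_{n,\alpha,\alpha}$. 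Hence $R^{*}(2\rightarrow q)\gtrsim p^{\tau}$ for any $\tau$ below that exponent; the $q$-dependent choice $\ell\approx p^{\alpha/2-\delta(q)}$ (with $\delta(q)>0$ small enough that $n/q+(\alpha/2-\delta(q))(1/2-1/q)-\alpha/2>0$) yields the same conclusion with logarithm-free constants in \eqref{eq:A}--\eqref{eq:B}.

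The step I expect to be the real obstacle is obtaining \eqref{eq:B} with a constant independent of $p$: a genuinely random subset of $\FF^{n}$ of size $\approx p^{\alpha}$ unavoidably loses a factor $(\log p)^{1/2}$ in its exponential sums, so one is forced to inflate $E'$ to size $\approx p^{\alpha}\log p$ — which is harmless because \eqref{eq:A} only demands an upper bound — and then to calibrate the progression length $\ell$ against this loss so that neither the arithmetic nor the random part of $\widehat{\mu}$ overshoots $Cp^{-\alpha/2}$. Everything else — the Chernoff-plus-union-bound lemma, and the lower bound $\bigl|\sum_{t<\ell}e(-t\xi_{1}/p)\bigr|\gtrsim\ell$ on the dual progression — is routine.
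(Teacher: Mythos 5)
Your proof is correct, and at its core it is the same construction as the paper's: a measure built from a random Salem-type set together with a small box (arithmetic progression), with the lower bound on $R^{*}(2\rightarrow q)$ obtained by testing against the indicator of the box and summing $|\widehat{f\mu}|^{q}$ over the dual Bohr set of size $\approx |\FF|^{n}/|P|$. The genuine difference lies in how you absorb the $\sqrt{\log|\FF|}$ loss that the random set inevitably carries in its exponential sums. The paper handles this by perturbing the exponent: it runs the proof of Theorem \ref{thm:sharpness} with $\alpha'=\alpha+\varepsilon$ and $\beta'=\alpha$, so the random set has size $|\FF|^{\alpha+\varepsilon}$ and the extra power $|\FF|^{\varepsilon/2}$ swallows the logarithm; the price is that $\varepsilon$, and hence the measure, depends on $q$. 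You instead inflate both pieces by a logarithmic factor ($|E'|\approx|\FF|^{\alpha}\log|\FF|$, $|P|\approx|\FF|^{\alpha/2}\log|\FF|$), calibrated so that the arithmetic and random contributions to $\widehat{\mu}$ are both $O(|\FF|^{-\alpha/2})$ with no residual logarithm, while \eqref{eq:A} still holds because it is only an upper bound. The payoff is that your single measure works simultaneously for every $q<q_{n,\alpha,\alpha}$ — precisely the strengthening the paper mentions after Theorem \ref{thm:sharpnessalpha} ("removing the dependence of exponential $q$") but does not carry out. Your exponent bookkeeping checks out: $p^{n/q}\ell^{1/2-1/q}M^{-1/2}$ gives $(2n-\alpha)/(2q)-\alpha/4$ up to logarithms, which is positive exactly for $q<(4n-2\alpha)/\alpha$, and the replacement of the progression by a $k$-dimensional box with $k>\alpha/2$ correctly handles $\alpha\geq 2$ (the paper's construction uses a full $n$-dimensional box $\{1,\dots,N\}^{n}$ throughout, which sidesteps this case distinction).
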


We may improve Theorem \ref{thm:sharpnessalpha} by removing the dependence of exponential $q$, but we will not develop this point here.

\section{Preliminary}

In this section we show some basic properties of Fourier transform on $\FF^{n}$ for later use. We refer to Green \cite{Green}, Stein and Shakarchi \cite{Stein} for more details on discrete Fourier analysis.

The inverse Fourier transform of $f$ at $\xi$ is defined as 
\[
f^{\vee}(\xi)=\sum_{x} f(x)e(x\cdot \xi).
\]
The convolution  of function $f$ and $g$ is defined as 
\[
f \ast g(x)=\sum_{y}f(x-y)g(y).
\]
We collect some basic properties of Fourier transform on $\FF^{n}$. For more details see  \cite[Proposition 6.1]{Green}, \cite{Stein}. 

\begin{lemma} \label{lem:Fourier}

Let $f, g: \FF^{n}\rightarrow \mathbb{C}$ be functions. Then we have 

(i) (Plancherel) $\sum_{\xi} |\widehat{f}(\xi)|^{2}=|\FF|^{n}\sum_{x} |f(x)|^{2}$.

(ii) (Inversion) $(\widehat{f})^{\vee}(x)=(f^{\vee})^{\wedge}(x)=|\FF|^{n}f(x)$.

(iii) (Convolution)  $\widehat{ f \ast g}(\xi)= \widehat{f}(\xi) \widehat{g}(\xi)$.

(iv) (Symmetry) $\sum_{\xi} \widehat{f}(\xi)g(\xi)=\sum_{x}f(x)\widehat{g}(x)$.
\end{lemma}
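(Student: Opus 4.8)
The plan is to derive all four identities from a single orthogonality relation for the additive characters $\xi\mapsto e(x\cdot\xi)$ on $\FF^{n}$, namely
\[
\sum_{\xi} e((x-y)\cdot\xi) = |\FF|^{n}\,[x=y],
\]
where $[x=y]$ equals $1$ if $x=y$ and $0$ otherwise. To establish this I would factor the sum over the $n$ coordinates and, in each coordinate, recognise a geometric sum over all $|\FF|$-th roots of unity: if the corresponding coordinate of $x-y$ vanishes then every summand is $1$ and the factor equals $|\FF|$, while otherwise the nontrivial roots of unity sum to $0$. Since every sum appearing below is finite, all interchanges of summation order are justified without further comment.

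For (iii) (Convolution) I would expand $\widehat{f\ast g}(\xi)=\sum_{x}e(-x\cdot\xi)\sum_{y}f(x-y)g(y)$ and substitute $u=x-y$; the exponential factors as $e(-u\cdot\xi)e(-y\cdot\xi)$, so the double sum separates into the product $\widehat{f}(\xi)\widehat{g}(\xi)$. For (iv) (Symmetry) I would write the left-hand side as $\sum_{\xi}\sum_{x}e(-x\cdot\xi)f(x)g(\xi)$, interchange the order of summation, and recognise the inner sum over $\xi$ as $\widehat{g}(x)$, using that the dot product is symmetric so that $e(-x\cdot\xi)=e(-\xi\cdot x)$.

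For (ii) (Inversion) I would compute $(\widehat{f})^{\vee}(x)=\sum_{\xi}\sum_{y}e(-y\cdot\xi)f(y)e(x\cdot\xi)$, interchange summation, and apply the orthogonality relation to the inner sum $\sum_{\xi}e((x-y)\cdot\xi)$; only the term $y=x$ survives, yielding $|\FF|^{n}f(x)$. The identity $(f^{\vee})^{\wedge}(x)=|\FF|^{n}f(x)$ follows from the same computation with the roles of the two signs exchanged. Finally, for (i) (Plancherel) I would write $\sum_{\xi}|\widehat{f}(\xi)|^{2}=\sum_{\xi}\widehat{f}(\xi)\overline{\widehat{f}(\xi)}$, expand $\overline{\widehat{f}(\xi)}=\sum_{y}e(y\cdot\xi)\overline{f(y)}$, interchange the order of summation, and apply orthogonality to $\sum_{\xi}e((y-x)\cdot\xi)$, collapsing the double sum to $|\FF|^{n}\sum_{x}|f(x)|^{2}$.

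There is no serious obstacle here: the only point requiring care is the bookkeeping of the asymmetric normalisation (the factor $|\FF|^{n}$ that appears in inversion and Plancherel but not in convolution) and the consistent placement of signs and complex conjugates in the characters, which must be tracked in accordance with the conventions fixed in \eqref{eq:ddd} and in the definition of $f^{\vee}$. Alternatively, (i) could be deduced from (ii)--(iv) by specialising the symmetry identity, but the direct orthogonality argument is the shortest and most transparent route.
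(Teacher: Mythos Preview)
Your proposal is correct and follows the standard route via orthogonality of additive characters. The paper itself does not supply a proof of this lemma; it merely states the identities and refers the reader to \cite[Proposition 6.1]{Green} and \cite{Stein}, so there is no approach to compare against.
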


By duality, we may also define $R^{*}(p\rightarrow q)$ as the minimal constant such that  
\begin{equation}\label{eq:restriction}
\Vert \widehat{f}\Vert_{L^{p'}(\mu)} \leq R^{*}(p\rightarrow q)\Vert f\Vert_{L^{q'}(\FF^{n})}
\end{equation}
for any function $f$ on $\FF^{n}$.
Here $q'$  denote the dual exponent of $q$ which means that $\frac{1}{q}+\frac{1}{q'}=1$.  We refer to \cite[Proposition 6.4]{Green} for a proof.

We note that the estimate \eqref{eq:restriction} is often called restriction estimate, while the estimate \eqref{eq:dd} is often called extension estimate. The terminology \emph{restriction problem} comes from the estimate \eqref{eq:restriction}.

\section{Proof of Theorem \ref{thm:main}}

Let $\mu$ be a probability  measure on $\FF^{n}$ satisfying the assumption of Theorem \ref{thm:main} and $f: \FF^{n}\rightarrow \mathbb{C}$ be a function. We will use the dual form of \eqref{eq:dd} to prove Theorem \ref{thm:main}. Therefore it is  sufficient to prove that for any $q\geq q_{n,\alpha, \beta}$,
\[
\V \widehat{f}\V_{L^{2}(\mu)}\lesssim \V f\V_{L^{q'}(\FF^{n})}
\]
for any function $f$ on $\FF^{n}$. Applying the basic properties of Fourier transform, Lemma \ref{lem:Fourier}, we obtain
\begin{equation*}
\begin{aligned}
\sum_{\xi } |\widehat{f}(\xi)|^{2}\mu(\xi)&= \sum_{\xi} \widehat{f}(\xi)\overline{\widehat{f}(\xi)\mu(\xi)}\\
&=\sum_{x}f(x) \overline{f \ast \mu ^{\vee}(x)} \\
&\leq \V f \V_{L^{q^{'}}(\FF^{n})} \V f \ast \mu ^{\vee}\V_{L^{q}(\FF^{n})}.
\end{aligned}
\end{equation*}
Let $K=\mu^{\vee}-\delta_{0}$. Since $f \ast \delta_{0}=f$ and $q\geq q_{n, \alpha, \beta}>2$, we have $q'<q$ and 
\[
\V f \V_{L^{q}(\FF^{n})}\leq \V f \V_{L^{q'}(\FF^{n})}.
\]
Thus it is sufficient to prove that for any $q\geq q_{n,\alpha, \beta}$,
\begin{equation*}
\V f \ast K\V_{L^q(\FF^{n})}\lesssim \V f \V_{L^{q'}(\FF^{n})}.
\end{equation*}
Since $|K(x)|\lesssim \FF^{-\beta/2}$ for any $x\in \FF^{n}$, we obtain 
\begin{equation}\label{eq:1toinfty}
\V f \ast K\V_{L^{\infty}(\FF^{n})}\leq \V K \V_{L^{\infty}(\FF^{n})}\V f\V_{L^{1}(\FF^{n})}\lesssim \FF^{-\beta/2} \V f\V_{L^{1}(\FF^{n})}.
\end{equation}
The inversion formula and the condition $\mu(x)\lesssim \FF^{-\alpha}$ implies that 
\[
|\widehat{K}(\xi)|=|(\mu^{\vee}-\delta_{0})^{\wedge}(\xi)|=|\FF|^{n}\mu(\xi)-1\lesssim |\FF|^{n-\alpha}.
\]
Combining with Plancherel identity we obtain 
\begin{equation}\label{eq:2to2}
\V f \ast K \Vert_{L^{2}(\FF^{n})}^{2}=\FF^{-n}\sum_{\xi}|\widehat{f}(\xi)|^{2}|\widehat{K}(\xi)|^{2}\lesssim |\FF|^{2n-2\alpha}\sum_{x} |f(x)|^{2}.
\end{equation}
Applying \eqref{eq:1toinfty}, \eqref{eq:2to2} and Riesz-Thorin interpolation theorem we obtain that for any $q\geq q_{n,\alpha,\beta}$
\begin{equation*}
\V f\ast K\V_{L^{q}(\FF^{n})}\lesssim \V f\V_{L^{q'}(\FF^{n})}.
\end{equation*}
Note that the constant depends on $\mu$ only, which finishes the proof.

\begin{remark}
The above arguments also works for  general finite fields. The only difference is that we take a non-trivial character instead of $e(-x\cdot \xi)$ in  the definition of $\widehat{f \mu}$ at \eqref{eq:ddd}.
\end{remark}

\section{Construction of measures}

Let $0<\alpha\leq \beta<n$ and $N$ be an integer such that  $N^{n}\approx |\FF|^{\alpha-\beta/2}$.
Let $A=\{1, \cdots, N\}^{n}$. Define 
\begin{equation}\label{eq:Bohr}
A^{*}=\{\xi \in \FF^{n}: |\widehat{A}(\xi)|\geq |A|/10\}.
\end{equation}
Note that $A^{*}$ is often called Bohr neighbourhood of $A$. We have the following lower bound for the  cardinality of $A^{*}$. 

\begin{lemma}\label{lem:A}
Use the above notation. We have
\begin{equation}
|A^{*}|\gtrsim_{n} \left(\frac{|\FF|}{N}\right)^{n}\gtrsim |\FF|^{n-\alpha+\beta/2}.
\end{equation}
\end{lemma}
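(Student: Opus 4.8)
The plan is to exploit the product structure of $A=\{1,\dots,N\}^{n}$. Writing $e(a)=e^{2\pi i a/|\FF|}$ for $a\in\FF$, the indicator function $\mathbf 1_{A}$ is a tensor product over the $n$ coordinates, so its Fourier transform factors:
\[
\widehat{A}(\xi)=\prod_{j=1}^{n}D_{N}(\xi_{j}),\qquad D_{N}(t)=\sum_{k=1}^{N}e(-kt)\quad(t\in\FF).
\]
Since $|D_{N}|$ attains its maximum $N$ at $t=0$, it then suffices to locate a large box centred at the origin on which every factor $|D_{N}(\xi_{j})|$ stays close to $N$: such a box automatically lies in $A^{*}$, and a box whose side length is a suitable $n$-dependent multiple of $|\FF|/N$ already has $\gtrsim_{n}(|\FF|/N)^{n}$ points.

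Fix a parameter $\delta_{n}\in(0,1/4)$ to be chosen, and identify each coordinate $\xi_{j}$ with its representative in $(-|\FF|/2,|\FF|/2]$. If $|\xi_{j}|\le\delta_{n}|\FF|/N$, then for every $1\le k\le N$ the real number $2\pi k\xi_{j}/|\FF|$ lies in $[-2\pi\delta_{n},2\pi\delta_{n}]\subset(-\pi/2,\pi/2)$, so $\cos(2\pi k\xi_{j}/|\FF|)\ge\cos(2\pi\delta_{n})>0$ and hence
\[
|D_{N}(\xi_{j})|\;\ge\;\operatorname{Re}D_{N}(\xi_{j})\;=\;\sum_{k=1}^{N}\cos\!\left(\frac{2\pi k\xi_{j}}{|\FF|}\right)\;\ge\;N\cos(2\pi\delta_{n}).
\]
Multiplying over $j$, any $\xi$ all of whose coordinates satisfy $|\xi_{j}|\le\delta_{n}|\FF|/N$ obeys $|\widehat{A}(\xi)|\ge N^{n}\cos(2\pi\delta_{n})^{n}=|A|\cos(2\pi\delta_{n})^{n}$. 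The key point is to choose $\delta_{n}$ so that $\cos(2\pi\delta_{n})^{n}\ge 1/10$; using $\cos(2\pi t)\ge 1-2\pi^{2}t^{2}$ one checks that $\delta_{n}=\tfrac{1}{10}n^{-1/2}$ works for every $n\ge 1$. With this choice every such $\xi$ belongs to $A^{*}$.

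It remains to count these $\xi$. For a single coordinate the admissible values form $\{m\in\FF:|m|\le\delta_{n}|\FF|/N\}$, which contains at least $\delta_{n}|\FF|/N$ elements as soon as $\delta_{n}|\FF|/N\ge 1$; this holds for $|\FF|$ large, because $N^{n}\approx|\FF|^{\alpha-\beta/2}$ together with $0<\alpha-\beta/2<n$ forces $|\FF|/N\to\infty$. Therefore
\[
|A^{*}|\;\ge\;\left(\frac{\delta_{n}|\FF|}{N}\right)^{n}\;=\;\delta_{n}^{n}\left(\frac{|\FF|}{N}\right)^{n}\;\gtrsim_{n}\;\left(\frac{|\FF|}{N}\right)^{n},
\]
and $(|\FF|/N)^{n}=|\FF|^{n}/N^{n}\approx|\FF|^{n}/|\FF|^{\alpha-\beta/2}=|\FF|^{n-\alpha+\beta/2}$, which is the claim; the finitely many remaining small values of $|\FF|$ are absorbed into the implied constant using the trivial bound $|A^{*}|\ge 1$ (the origin always lies in $A^{*}$).

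This lemma is not really hard; the one point that needs a little care is that the usable box radius $\delta_{n}|\FF|/N$ must be allowed to shrink as $n$ grows. This is intrinsic to the method, since $|\widehat{A}|$ equals $|A|$ only at the origin and each one-dimensional factor must be forced within a factor $\approx 10^{-1/n}$ of its maximum, but it is harmless here because only a bound with an $n$-dependent constant is asserted.
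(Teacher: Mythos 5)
Your proof is correct and follows essentially the same route as the paper: both arguments exhibit a box of side $\approx_n |\FF|/N$ centred at the origin on which the phases $e(x\cdot\xi)$ stay close to $1$, forcing $|\widehat{A}(\xi)|\geq |A|/10$ there. The only (cosmetic) difference is that you factor $\widehat{A}$ through the product structure of $A$ and control each one-dimensional Dirichlet-type factor, while the paper bounds $|x\cdot\xi|$ directly by $|\FF|/10$ using the slightly smaller radius $|\FF|/(10Nn)$; both yield the same lower bound $\gtrsim_n(|\FF|/N)^n$.
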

\begin{proof}
Let  $\xi=(\xi_{1},\cdots, \xi_{n})$ with $|\xi_{j}|\leq |\FF|/Nn10$ for all $1\leq j\leq n$, then
\[
|x\cdot \xi|\leq |\FF|/10, \, \forall x\in A.
\]
It follows that
\[
|\sum_{x\in A}e(x\cdot \xi)|\geq |\sum_{x\in A}Re(e(x\cdot \xi))|\geq |A|/10.
\]
Thus we conclude that $\xi\in A^{*}$  when $|\xi_{j}|\leq |\FF|/Nn10$ for all $1\leq j\leq n$, which completes the proof.
\end{proof}

By a random construction, Hayes \cite{Hayes} (see an exposition of Babai \cite[Theorem 5.14]{Babai} obtained the following result.  The author \cite{ChenS} reproved Hayes's result by studying a different random model. We state it in the following form for our later use, and we show an outline of the argument in \cite{ChenS} for completeness.  For $B\subset \FF^{n}$ we denote by ${\bf 1}_{B}$ the characteristic function of $B$.
\begin{theorem}\label{thm:H}
Let $0<\alpha<n$ there exists a subset $E\subset \FF^{n}$ such that  $|E|\approx |\FF|^{\alpha}$ and 
\begin{equation}
|\widehat{{\bf 1}_E}(\xi)|\lesssim_{n} \sqrt{|E|} \sqrt{\log |\FF|}, \, \forall \xi \neq 0.
\end{equation}
Furthermore for any (small) $\eta>0$,
\begin{equation}
|E \cap A| \leq \eta |A|
\end{equation}
provided $\FF$ is large enough.
\end{theorem}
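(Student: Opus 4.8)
The plan is to construct $E$ by a random process and verify the three required properties using concentration inequalities together with a union bound over the $|\FF|^n$ frequencies. Fix $p = |\FF|^{\alpha - n}$ and let $E$ be the random subset of $\FF^n$ obtained by including each point $x \in \FF^n$ independently with probability $p$. Then $\EE |E| = p|\FF|^n = |\FF|^\alpha$, and since the cardinality $|E| = \sum_x \xi_x$ is a sum of i.i.d.\ Bernoulli variables, a Chernoff bound gives $|E| \approx |\FF|^\alpha$ with probability $1 - o(1)$; more precisely, the probability that $|E|$ deviates from its mean by more than a constant factor is exponentially small in $|\FF|^\alpha$, hence much smaller than $|\FF|^{-n}$.

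For the Fourier bound, fix $\xi \neq 0$ and write $\widehat{{\bf 1}_E}(\xi) = \sum_x \xi_x \, e(-x\cdot \xi)$, a sum of independent complex-valued random variables each bounded by $1$ in modulus, with mean $\EE \widehat{{\bf 1}_E}(\xi) = p \sum_x e(-x \cdot \xi) = 0$ because $\xi \neq 0$. Splitting into real and imaginary parts and applying Hoeffding's (or Bernstein's) inequality to each, one gets
\[
\PP\bigl( |\widehat{{\bf 1}_E}(\xi)| \geq t \bigr) \lesssim \exp\bigl( -c\, t^2 / (p|\FF|^n) \bigr) = \exp\bigl( -c\, t^2 / |E| \bigr),
\]
after absorbing the variance estimate $\sum_x p(1-p) \le p|\FF|^n = |E|$. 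Choosing $t = C_n \sqrt{|E|}\sqrt{\log|\FF|}$ with $C_n$ large makes the right-hand side smaller than $|\FF|^{-2n}$, so a union bound over all $\xi \neq 0$ (there are fewer than $|\FF|^n$ of them) shows that with probability $1 - o(1)$ the desired bound holds simultaneously for every nonzero frequency.

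For the last property, note $|E \cap A| = \sum_{x \in A} \xi_x$ has mean $p|A| = |A| \cdot |\FF|^{\alpha - n}$. Recall $A = \{1,\dots,N\}^n$ with $N^n \approx |\FF|^{\alpha - \beta/2}$, so $p|A| \approx |\FF|^{\alpha - n} \cdot |\FF|^{\alpha - \beta/2} = |\FF|^{2\alpha - \beta/2 - n}$; since $\alpha < n$ one may check this exponent is negative (indeed $2\alpha - \beta/2 - n < 2n - n - \beta/2 = n - \beta/2$, but more to the point, for the regime $\alpha \le \beta$ relevant to the applications one has $2\alpha - \beta/2 - n \le 2\beta - \beta/2 - n = \frac{3}{2}\beta - n < 0$ as $\beta < n$), so $\EE|E \cap A| \to 0$. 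Hence $\EE|E\cap A| \le \tfrac{\eta}{2}|A|$ for $\FF$ large, and Markov's inequality gives $\PP(|E \cap A| > \eta|A|) \le 1/2$; combined with the previous two estimates, all three properties hold simultaneously with positive probability, so a suitable $E$ exists. The main point requiring care is the bookkeeping in the union bound — one must choose the threshold $t$ with the right power of $\log|\FF|$ so that the tail probability beats the factor $|\FF|^n$ coming from the number of frequencies, which is exactly why the $\sqrt{\log|\FF|}$ appears and cannot be removed by this argument.
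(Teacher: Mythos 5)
Your construction is the same as the paper's: include each point of $\FF^{n}$ independently with probability $|\FF|^{\alpha-n}$, use binomial concentration for $|E|$, a large-deviation bound plus a union bound over the $|\FF|^{n}$ frequencies for the Fourier coefficients (correctly exploiting the variance $\approx |E|$ rather than plain Hoeffding), and concentration of $|E\cap A|$ for the last property; the paper's own proof is only a sketch, and you supply essentially the details it omits. One slip to fix: your side claim that $\EE|E\cap A|\to 0$ is false in general, since the exponent $2\alpha-\beta/2-n$ is positive whenever $2\alpha-\beta/2>n$ (e.g.\ $\alpha=\beta=0.9\,n$), and the inequality $\tfrac{3}{2}\beta-n<0$ requires $\beta<\tfrac{2n}{3}$, not merely $\beta<n$. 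This detour is unnecessary: the bound you actually need, $\EE|E\cap A|=|\FF|^{\alpha-n}|A|\le\tfrac{\eta}{2}|A|$ for $|\FF|$ large, follows immediately from $|\FF|^{\alpha-n}\to 0$ with no reference to the size of $|A|$, after which your Markov argument goes through unchanged.
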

\begin{proof}[Proof sketch] 
Let $\delta=|\FF|^{\alpha- n}$. Then we choose each point of $\FF^{n}$ with probability $\delta$ and remove it with probability $1-\delta$, all choices being independent of each other. Let $E=E^{\omega}$ be the collection of these chosen points. 

First of all since the cardinality of $E$ forms a binomial distribution with total number $\FF^{n}$ and the succeed probability $\delta$, we have $|E|\approx |\FF|^{\alpha}$ with high probability  when $|\FF|$ is large enough. Secondly observe that for $\xi \in \FF^{n}$, 
\[
\widehat{{\bf 1}_{E}}(\xi)=\sum_{x\in \FF^{n}} e(-x\cdot\xi) {\bf 1}_{E}(x),
\]
which is the sum of a sequence of independent random variables. By applying  large deviations estimate we obtain that  
\[
|\widehat{{\bf 1}_{E}}(\xi)|\lesssim_{n} \sqrt{|E|} \sqrt{\log |\FF|}, \, \forall \xi \neq 0, 
\]
with high probability. For the subset $A$, the size of $A\cap E$ forms a binomial distribution, and hence $|A\cap E|\approx |A|\delta$ with high probability. Thus we conclude that with high probability  $E$ satisfies our need, which finish the proof.
\end{proof}

By ``combining" the set $A$ of Lemma \ref{lem:A} and $E$ of Theorem \ref{thm:H},  we obtain  the desired measure. We show the construction of the desired measure and its basic properties in the following.

\begin{theorem}\label{thm:construction}
 For any $0<\beta\leq \alpha<n$, let $A=\{1,\cdots, N\}^{n}$ where $N^{n}\approx |\FF|^{\alpha-
 \beta/2}$. Let $E$ be a set such that satisfies the properties in Theorem \ref{thm:H} for $\alpha$. Define 
\[
 \mu = \frac{{\bf 1}_{E} +{\bf 1}_{A} }{|E|+|A|}.
\]
Then $\mu$ is a probability measure on $\FF^{n}$, 
\begin{equation}\label{eq:regular}
\mu(x)\lesssim |\FF|^{-\alpha}, \, \forall x \in \FF^{n}, 
\end{equation}
and
\begin{equation}\label{eq:Salem}
|\widehat{\mu}(\xi)|\lesssim |\FF|^{-\alpha/2} \sqrt{\log |\FF|}+|\FF|^{-\beta/2}, \, \forall \xi \neq 0.
\end{equation}
Furthermore  let $f={\bf 1}_{A}$, then 
\begin{equation}\label{eq:largeFoureir}
| \widehat{ f \mu} (\xi)| \gtrsim |\FF|^{-\beta/2}, \, \forall \xi \in A^{*}.
\end{equation}
The definition of $A^{*}$ is at \eqref{eq:Bohr}.
\end{theorem}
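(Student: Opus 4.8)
The plan is to verify the four assertions in turn, each of which is a short computation once the right identities are in place. The fact that $\mu$ is a probability measure is immediate: $\mu(x)\ge 0$ everywhere and $\sum_x \mu(x)=(|E|+|A|)/(|E|+|A|)=1$.

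For the pointwise bound \eqref{eq:regular}, I would simply note $\mu(x)\le 2/(|E|+|A|)\le 2/|E|$, and since $|E|\approx|\FF|^\alpha$ by Theorem \ref{thm:H}, this gives $\mu(x)\lesssim|\FF|^{-\alpha}$. (One checks that $|A|=N^n\approx|\FF|^{\alpha-\beta/2}$, which is of smaller order than $|E|$ since $\beta>0$, so the denominator is comparable to $|E|$; this will be useful repeatedly.) For the Fourier decay \eqref{eq:Salem}, write $\widehat{\mu}(\xi)=(\widehat{{\bf 1}_E}(\xi)+\widehat{{\bf 1}_A}(\xi))/(|E|+|A|)$ and estimate the two terms separately. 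For the first term, Theorem \ref{thm:H} gives $|\widehat{{\bf 1}_E}(\xi)|\lesssim_n\sqrt{|E|}\sqrt{\log|\FF|}$ for $\xi\neq 0$, so dividing by $\approx|E|\approx|\FF|^\alpha$ yields a contribution $\lesssim|\FF|^{-\alpha/2}\sqrt{\log|\FF|}$. For the second term, the trivial bound $|\widehat{{\bf 1}_A}(\xi)|\le|A|$ divided by the denominator $\gtrsim|E|+|A|\gtrsim|A|\cdot|\FF|^{\beta/2}$ (using $|E|\approx|\FF|^\alpha=|A|\,|\FF|^{\beta/2}$ up to constants) gives a contribution $\lesssim|\FF|^{-\beta/2}$. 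Summing the two bounds gives \eqref{eq:Salem}.

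The last assertion \eqref{eq:largeFoureir} is where the design of the construction pays off. With $f={\bf 1}_A$ we have $f\mu = {\bf 1}_A\mu = {\bf 1}_A\cdot({\bf 1}_E+{\bf 1}_A)/(|E|+|A|)$, so $f\mu=({\bf 1}_{A\cap E}+{\bf 1}_A)/(|E|+|A|)$. Taking Fourier transforms, $\widehat{f\mu}(\xi)=(\widehat{{\bf 1}_{A\cap E}}(\xi)+\widehat{{\bf 1}_A}(\xi))/(|E|+|A|)$. For $\xi\in A^*$ the second term satisfies $|\widehat{{\bf 1}_A}(\xi)|\ge|A|/10$ by the definition \eqref{eq:Bohr}, while the first term is bounded trivially by $|\widehat{{\bf 1}_{A\cap E}}(\xi)|\le|A\cap E|\le\eta|A|$ by the "combing" property $|E\cap A|\le\eta|A|$ from Theorem \ref{thm:H}; choosing $\eta$ a small fixed constant (say $\eta=1/100$) we get $|\widehat{f\mu}(\xi)|\ge(|A|/10-\eta|A|)/(|E|+|A|)\gtrsim|A|/|E|\approx|\FF|^{-\beta/2}$, using once more $|E|+|A|\approx|E|\approx|A|\,|\FF|^{\beta/2}$. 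This is \eqref{eq:largeFoureir}.

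The only genuinely delicate point — the rest being bookkeeping with the orders of magnitude of $|A|$, $|E|$, and $|\FF|$ — is making sure the cancellation of $\widehat{{\bf 1}_{A\cap E}}$ against $\widehat{{\bf 1}_A}$ in the last step is legitimate, i.e. that the random set $E$ can simultaneously be chosen with $|E|\approx|\FF|^\alpha$, the stated Fourier bound, and $|E\cap A|\le\eta|A|$. This is exactly the content of Theorem \ref{thm:H}, whose proof sketch shows all three events hold with high probability simultaneously, so a single realization $E$ works; I would just invoke that theorem. Everything else reduces to the repeatedly-used comparison $|E|+|A|\approx|E|\approx|\FF|^\alpha$ and the identity $|\FF|^\alpha=|A|\cdot|\FF|^{\beta/2}$ up to constants, which follows from the choice $N^n\approx|\FF|^{\alpha-\beta/2}$.
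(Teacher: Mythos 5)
Your proposal is correct and follows essentially the same route as the paper: verify the normalization, split $\widehat{\mu}$ into the $E$ and $A$ contributions using Theorem \ref{thm:H} and the trivial bound $|\widehat{{\bf 1}_A}|\le|A|$, and for the lower bound on $A^*$ play the Bohr-set property of $A$ against the smallness of $|E\cap A|$ with $\eta=1/100$. The only (harmless) addition is your explicit remark that the three properties of $E$ must hold for a single realization, which the paper leaves implicit in Theorem \ref{thm:H}.
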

\begin{proof}
Since $\sum_{x}\mu(x)=1$ and 
$|E|+|A|\approx |E|\approx |\FF|^{\alpha}$,  we obtain that $\mu$ is a probability measure on $\FF^{n}$ and $\mu(x)\lesssim |\FF|^{-\alpha}$ for all $x\in \FF^{n}$. 

By Theorem \ref{thm:H}, we have that 
\[
|\widehat{ {\bf 1}_{E}}(\xi)|\lesssim_{n} |\FF|^{\alpha/2}\sqrt{\log |\FF|}, \, \forall \xi \neq 0.
\]
By the definition of Fourier transform, $|\widehat{ {\bf 1}_A}(\xi)|\leq |A|$. It follows that the estimate \eqref{eq:Salem} holds.

For $f={\bf 1}_{A}$ we have 
\[
f\mu=\frac{{\bf 1}_{E\cap A}+ {\bf 1}_{A}}{|E|+|A|}.
\]
Let $\eta=1/100$ in Theorem \ref{thm:H}, then $|A\cap E|\leq |A|/100$ when $|\FF|$ is large enough. Thus for $\xi \in A^{*}$ we have 
\[
|\widehat{\bf 1_{E\cap A}}(\xi)+ \widehat{{\bf 1}_{A}}(\xi)|\geq |A|/10-|A\cap E|\gtrsim |A|,
\]
and then we arrive the estimate \eqref{eq:largeFoureir}. This completes  the proof.
\end{proof}

\section{Proof of Theorem \ref{thm:sharpness} and Theorem \ref{thm:sharpnessalpha}}

\begin{proof}[Proof of Theorem \ref{thm:sharpness}]
Let $0<\beta<\alpha<n$ and  $\mu$ be the measure of Theorem \ref{thm:construction}.  We use notation from Theorem \ref{thm:construction} in the following. Since $\beta<\alpha$ the estimate \eqref{eq:Salem} becomes 
\begin{equation*}
|\widehat{\mu}(\xi)|\lesssim |\FF|^{-\beta/2}, \, \forall \xi \neq 0.
\end{equation*}
Thus the measure $\mu$ satisfies the conditions \eqref{eq:A}, \eqref{eq:B}  of Theorem \ref{thm:sharpness}.  

Let $f={\bf 1}_{A}$, then the estimate \eqref{eq:largeFoureir} implies that 
\begin{equation*}
\V \widehat{f\mu} \V_{L^{q}(\FF^{n})}\gtrsim |\FF|^{-\beta/2}|A^{*}|^{1/q}\gtrsim |\FF|^{\frac{2n-2\alpha+\beta-q\beta}{2q}}.
\end{equation*}
Meanwhile,  we have 
\begin{equation}
\V f \V_{L^{2}(\mu)}\lesssim |\FF|^{-\beta/4}. 
\end{equation}
It follows that 
\begin{equation}\label{eq:R}
R^{*}(2\rightarrow q)\geq \frac{\V \widehat{f\mu }\V_{L^{q}(\FF^{n})}}{\V f \V_{L^{2}(\mu)}}\gtrsim |\FF|^{\frac{4n-4\alpha+2\beta-q\beta}{4q}},
\end{equation}
which is our claim.
\end{proof}

\begin{proof}[Proof of Theorem \ref{thm:sharpnessalpha}]
Let $0<\alpha<n$ and $q<\frac{4n-2\alpha}{\alpha}$. Recall that $q_{n, \alpha, \beta}=\frac{4n-4\alpha+2\beta}{\beta}$. 
There is a (small) $\varepsilon>0$ such that 
\[
q<q_{n, \alpha', \beta'}
\]
where $\alpha'=\alpha+\varepsilon$ and $\beta'=\alpha$. Applying the same argument  as in the proof of Theorem \ref{thm:sharpness} to $\alpha'$ and $ \beta'$, we obtain that  there is a measure $\mu$ on $\FF^{n}$ such that 
\[
\mu(x)\lesssim |\FF|^{-\alpha'}\lesssim |\FF|^{-\alpha}, \forall x\in \FF^{n},
\]
and 
\[
| \widehat{\mu}(\xi)| \lesssim |\FF|^{-\beta'/2}\lesssim |\FF|^{-\alpha/2}, \forall \xi\neq 0.
\] 
Furthermore, there is a function $f$ such that 
\begin{equation*}
R^{*}(2\rightarrow q)\geq \frac{\V \widehat{f\mu }\V_{L^{q}(\FF^{n})}}{\V f \V_{L^{2}(\mu)}}\gtrsim |\FF|^{\frac{4n-4\alpha'+2\beta'-q\beta'}{4q}}.
\end{equation*}
Since $q<q_{n, \alpha', \beta'}$, we complete the proof.
\end{proof}

\section{Remarks}

1.  Our construction also the implies the following necessary condition of $R^{*}(p\rightarrow q)\lesssim 1$ for $1\leq p, q\leq \infty$.

\begin{theorem}
Let $0<\beta \leq \alpha<n$, and $\mu$ be a probability measure on $\FF^{n}$ which satisfies the conditions \eqref{eq:A}, \eqref{eq:B}. For $1\leq p, q\leq \infty$ if $R^{*}(p, q)\lesssim 1$, then 

\begin{equation}
q \geq 
  \begin{cases}
   \infty  & p=1, \\
   \frac{2n-2\alpha+\beta}{\beta} & p=\infty,\\
   \frac{p(2n-2\alpha+\beta)}{(p-1)\beta} &  1<p<\infty.
  \end{cases}
\end{equation}
\end{theorem}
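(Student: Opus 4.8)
The plan is to mimic the proofs of Theorems \ref{thm:sharpness} and \ref{thm:sharpnessalpha}, testing the extension estimate \eqref{eq:dd} in the form $\V \widehat{f\mu}\V_{L^q(\FF^n)}\leq R^*(p\to q)\V f\V_{L^p(\mu)}$ against the single function $f={\bf 1}_A$, where $\mu$ is the measure built in Theorem \ref{thm:construction} from $A=\{1,\dots,N\}^n$ with $N^n\approx|\FF|^{\alpha-\beta/2}$ and $E$ as in Theorem \ref{thm:H}. I would first record the three quantities that enter the ratio bound. By \eqref{eq:largeFoureir} and Lemma \ref{lem:A}, since $|\widehat{f\mu}(\xi)|\gtrsim|\FF|^{-\beta/2}$ for all $\xi\in A^*$ and $|A^*|\gtrsim|\FF|^{n-\alpha+\beta/2}$, I get $\V\widehat{f\mu}\V_{L^q(\FF^n)}\gtrsim|\FF|^{-\beta/2}|A^*|^{1/q}\gtrsim|\FF|^{-\beta/2+(n-\alpha+\beta/2)/q}$ for every finite $q$; for $q=\infty$ the same pointwise bound gives $\V\widehat{f\mu}\V_{L^\infty}\gtrsim|\FF|^{-\beta/2}$, which is the $1/q\to 0$ limit.

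Next I would compute $\V f\V_{L^p(\mu)}$. Since $f={\bf 1}_A$, we have $\V f\V_{L^p(\mu)}^p=\sum_x {\bf 1}_A(x)\mu(x)=\mu(A)$, and because $|E|+|A|\approx|\FF|^\alpha$ while $|A|=N^n\approx|\FF|^{\alpha-\beta/2}$, one finds $\mu(A)\approx|\FF|^{\alpha-\beta/2}/|\FF|^{\alpha}=|\FF|^{-\beta/2}$, independently of $p$ (this also covers $p=\infty$, where $\V f\V_{L^\infty(\mu)}$ is comparable to a constant but one should instead use $\V f\V_{L^\infty(\mu)}\le 1$; in fact the cleanest route for all $p\in[1,\infty)$ is $\V f\V_{L^p(\mu)}=\mu(A)^{1/p}\approx|\FF|^{-\beta/(2p)}$). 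Hence
\begin{equation*}
R^*(p\to q)\ \gtrsim\ \frac{\V\widehat{f\mu}\V_{L^q(\FF^n)}}{\V f\V_{L^p(\mu)}}\ \gtrsim\ |\FF|^{-\beta/2+(n-\alpha+\beta/2)/q+\beta/(2p)}.
\end{equation*}
For $R^*(p\to q)\lesssim 1$ the exponent must be $\le 0$, i.e. $\frac{n-\alpha+\beta/2}{q}\le\frac{\beta}{2}-\frac{\beta}{2p}=\frac{\beta}{2}\cdot\frac{p-1}{p}$, which rearranges to $q\ge\frac{p(2n-2\alpha+\beta)}{(p-1)\beta}$ when $1<p<\infty$. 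The case $p=\infty$ is the limit $q\ge\frac{2n-2\alpha+\beta}{\beta}$, and $p=1$ forces the exponent $-\beta/2+(n-\alpha+\beta/2)/q+\beta/2=(n-\alpha+\beta/2)/q>0$ for every finite $q$, so necessarily $q=\infty$; these are exactly the three cases of the stated theorem.

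The only subtlety — and the step I expect to need the most care — is the regime $\alpha=\beta$, where \eqref{eq:Salem} only gives $|\widehat{\mu}(\xi)|\lesssim|\FF|^{-\alpha/2}\sqrt{\log|\FF|}$ rather than the clean $|\FF|^{-\beta/2}$, so strictly speaking $\mu$ satisfies \eqref{eq:B} only up to a logarithmic loss. I would handle this exactly as in the proof of Theorem \ref{thm:sharpnessalpha}: replace the pair $(\alpha,\beta)$ by $(\alpha',\beta')=(\alpha+\varepsilon,\beta)$ for a small $\varepsilon>0$, noting that the measure then genuinely satisfies \eqref{eq:A} and \eqref{eq:B} for $(\alpha',\beta')$, hence also for $(\alpha,\beta)$ since $\alpha'>\alpha$ and $\beta'=\beta$; running the above computation with $\alpha'$ in place of $\alpha$ gives the necessary bound $q\ge\frac{p(2n-2\alpha'+\beta)}{(p-1)\beta}$, and letting $\varepsilon\to 0$ recovers the claimed inequality (the statement asserts only $q\ge$, not strict violation at the endpoint, so this limiting argument suffices). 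For $\beta<\alpha$ no such trick is needed and the computation is direct. I would also remark, as the paper does after Theorem \ref{thm:sharpnessalpha}, that combining the extension-estimate formulation with the interpolation-type observation $\V f\V_{L^q(\FF^n)}\le\V f\V_{L^{q'}(\FF^n)}$ is not required here since we argue directly with the definition \eqref{eq:dd}.
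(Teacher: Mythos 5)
Your proposal is correct and follows essentially the same route as the paper: test the extension inequality on $f={\bf 1}_A$ for the measure of Theorem \ref{thm:construction}, lower-bound $\V\widehat{f\mu}\V_{L^q}$ via \eqref{eq:largeFoureir} and Lemma \ref{lem:A}, and compute $\V f\V_{L^p(\mu)}=\mu(A)^{1/p}\approx|\FF|^{-\beta/(2p)}$, which reproduces exactly the paper's bound $R^{*}(p\rightarrow q)\gtrsim |\FF|^{\frac{2n-2\alpha+\beta-q\beta}{2q}+\frac{\beta}{2p}}$. Your extra care with the $\alpha=\beta$ case (via the $\varepsilon$-perturbation as in Theorem \ref{thm:sharpnessalpha}) and with the endpoints $p=1,\infty$ is a welcome tightening of details the paper's sketch leaves implicit.
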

\begin{proof}[Proof sketch] 
Assume first $1\leq p<\infty$.  We use the same argument and notation as in the proof of Theorem \ref{thm:sharpness}.  For $1\leq p<\infty$ the estimate \eqref{eq:R} becomes 
\[
R^{*}(p\rightarrow q)\gtrsim |\FF|^{\frac{2n-2\alpha+\beta-q\beta}{2q}+\frac{\beta}{2 p}},
\]
which finishes the proof for   $1\leq p<\infty$. The same proof works for  $q=\infty$.
\end{proof}

It is clear that $R^{*}(1\rightarrow \infty)\leq1$.  For  $1<p\leq \infty$, we do not know whether the necessary condition is also sufficient.

2.  In the following we show finite field analogues of remarks in Mitsis \cite{Mitsis}. It is easy to see that if $\mu$ is a probability measure on $\FF^{n}$ such that $\mu(x)\lesssim |\FF|^{-\alpha}, \, \forall x\in \FF^{n},$ then $|\text{Supp} (\mu)|\gtrsim |\FF|^{\alpha}$. Furthermore if $\mu(x)\approx |\FF|^{-\alpha}$ for all $x\in \text{Supp}(\mu )$ then $|\text{Supp}(\mu)|\approx |\FF|^{\alpha}$, and we call the measure $\mu$ a $\alpha$-Ahlfors-David regular measure. 

By applying the basic properties of Fourier transform,  Lemma \ref{lem:Fourier}, we obtain the following easy facts. 
\begin{proposition} Let $\mu$ be a probability measure on $\FF^{n}$ and $|\widehat{\mu}(\xi)|\lesssim |\FF|^{-\beta/2}$ for any $\xi\neq 0$. Then we have $
|\text{Supp}(\mu)|\gtrsim |\FF|^{\beta}$,
and $
\mu(x)\lesssim |\FF|^{-\beta /2}$.
\end{proposition}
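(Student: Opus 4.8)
The plan is to prove the two stated estimates directly from Plancherel and inversion, treating each separately.

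\medskip

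\noindent\textbf{First estimate: $|\mathrm{Supp}(\mu)|\gtrsim |\FF|^{\beta}$.}
First I would write out Plancherel for $\mu$ itself. By Lemma \ref{lem:Fourier}(i) we have
\[
\sum_{\xi}|\widehat{\mu}(\xi)|^{2}=|\FF|^{n}\sum_{x}\mu(x)^{2}.
\]
On the right-hand side, since $\mu$ is a probability measure supported on $S:=\mathrm{Supp}(\mu)$, Cauchy--Schwarz gives $1=\left(\sum_{x\in S}\mu(x)\right)^{2}\leq |S|\sum_{x}\mu(x)^{2}$, so $\sum_{x}\mu(x)^{2}\geq |S|^{-1}$. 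On the left-hand side, split off the $\xi=0$ term: $\widehat{\mu}(0)=\sum_{x}\mu(x)=1$, and for $\xi\neq 0$ use the hypothesis $|\widehat{\mu}(\xi)|\lesssim |\FF|^{-\beta/2}$, so that
\[
\sum_{\xi}|\widehat{\mu}(\xi)|^{2}\leq 1+(|\FF|^{n}-1)\cdot C|\FF|^{-\beta}\lesssim |\FF|^{n-\beta},
\]
using $\beta<n$ so that the second term dominates. Combining, $|\FF|^{n}/|S|\lesssim |\FF|^{n-\beta}$, i.e. $|S|\gtrsim |\FF|^{\beta}$.

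\medskip

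\noindent\textbf{Second estimate: $\mu(x)\lesssim |\FF|^{-\beta/2}$.}
Here I would use the inversion formula, Lemma \ref{lem:Fourier}(ii): for each fixed $x$,
\[
\mu(x)=|\FF|^{-n}\,(\widehat{\mu})^{\vee}(x)=|\FF|^{-n}\sum_{\xi}\widehat{\mu}(\xi)\,e(x\cdot\xi).
\]
Taking absolute values and separating $\xi=0$ from $\xi\neq 0$,
\[
\mu(x)\leq |\FF|^{-n}\Big(|\widehat{\mu}(0)|+\sum_{\xi\neq 0}|\widehat{\mu}(\xi)|\Big)\leq |\FF|^{-n}\big(1+(|\FF|^{n}-1)\cdot C|\FF|^{-\beta/2}\big)\lesssim |\FF|^{-\beta/2},
\]
again because $\beta/2<n$ forces the middle term to dominate $|\FF|^{-n}$.

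\medskip

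\noindent\textbf{Main obstacle.} There is essentially no obstacle: both parts are one-line consequences of the Plancherel identity and the inversion formula once the $\xi=0$ frequency is isolated, and the only thing to be careful about is that the trivial bound $|\widehat{\mu}(\xi)|\le 1$ at $\xi=0$ is negligible compared with the contribution of the roughly $|\FF|^{n}$ nonzero frequencies, which is exactly where the hypothesis $\beta<n$ (equivalently $\beta/2<n$) is used. If one wanted the cleaner conclusion $\mu(x)\lesssim|\FF|^{-\beta}$ (matching the first estimate's exponent) one would instead need an $\ell^{1}$ bound on $\widehat{\mu}$ of size $|\FF|^{n-\beta}$, which does not follow from the $\ell^{\infty}$ hypothesis alone; so the stated $|\FF|^{-\beta/2}$ is the natural output of this argument.
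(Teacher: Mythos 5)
Your proof is correct and follows essentially the same route as the paper's: Cauchy--Schwarz plus Plancherel for the support bound, and the inversion formula for the pointwise bound. The only difference is that you explicitly isolate the $\xi=0$ frequency and note where $\beta<n$ is used, a detail the paper leaves implicit; this is a harmless (indeed slightly more careful) elaboration, not a different argument.
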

\begin{proof}
Applying Cauchy-Schwarz inequality and Plancherel identity, we obtain
\begin{equation}
\begin{aligned}
1=\left(\sum_{x}\mu(x)\right)^{2}&\leq |\text{Supp}(\mu)| \sum_{x}\mu(x)^{2}\\
&=|\text{Supp}(\mu)| |\FF|^{-n}\sum_{\xi}|\widehat{\mu(\xi)}|^{2}.
\end{aligned}
\end{equation}
Applying the condition $|\widehat{\mu}(\xi)|\lesssim |\FF|^{-\beta/2}$, we arrive the estimate $|\text{Supp}(\mu)|\gtrsim |\FF|^{\beta}$.

The inversion formula of  $\mu$ is
\[
\mu(x)=|\FF|^{-n}\sum_{\xi}\widehat{\mu}(\xi)e(-x\cdot \xi).
\] 
Thus we obtain the estimate $\mu(x)\lesssim |\FF|^{-\beta/2}.$
\end{proof}

Applying the above proposition and Theorem \ref{thm:main}, we obtain the following restriction estimate if we only know the Fourier coefficients of $\mu$. We emphasize that the following corollary is the finite field version of \cite[Corollary 3.1]{Mitsis}.

\begin{corollary}
Let $\mu$ be a probability measure on $\FF^{n}$ and $|\widehat{\mu}(\xi)|\lesssim |\FF|^{-\beta/2}$ for any $\xi\neq 0$. Then for any $q\geq \frac{4n}{\beta}$ we have $R^{*}(2\rightarrow q)\lesssim 1$.
\end{corollary}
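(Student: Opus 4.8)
The plan is to deduce this directly from Theorem~\ref{thm:main} together with the Proposition just proved. First I would invoke the Proposition: since $|\widehat{\mu}(\xi)|\lesssim |\FF|^{-\beta/2}$ for all $\xi\neq 0$, it follows that $\mu(x)\lesssim |\FF|^{-\beta/2}$ for all $x\in\FF^{n}$. In other words, $\mu$ satisfies the pointwise bound \eqref{eq:A} with exponent $\alpha=\beta/2$, while by hypothesis it satisfies the Fourier bound \eqref{eq:B} with the given exponent $\beta$. Thus $\mu$ fulfils both assumptions of Theorem~\ref{thm:main} for the parameter pair $(\alpha,\beta)=(\beta/2,\beta)$, provided this pair lies in the admissible range $0<\alpha,\beta<n$; this holds as soon as $0<\beta<n$, and the remaining degenerate possibility $\beta\geq n$ (which forces $\mu$ to be essentially spread over all of $\FF^{n}$) is either excluded by convention or handled separately.

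Next I would apply Theorem~\ref{thm:main} with these parameters: it yields $R^{*}(2\rightarrow q)\lesssim 1$ for every $q\geq q_{n,\beta/2,\beta}$, with an implied constant independent of $|\FF|$. It then remains only to identify the threshold exponent. By the defining formula $q_{n,\alpha,\beta}=(4n-4\alpha+2\beta)/\beta$, substituting $\alpha=\beta/2$ gives
\[
q_{n,\beta/2,\beta}=\frac{4n-4\cdot\frac{\beta}{2}+2\beta}{\beta}=\frac{4n-2\beta+2\beta}{\beta}=\frac{4n}{\beta},
\]
which is exactly the range claimed in the corollary.

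The argument is an immediate combination of results already in hand, so there is no genuine obstacle; the only points requiring (routine) care are the arithmetic simplification above and the verification that $(\beta/2,\beta)$ is an admissible pair for Theorem~\ref{thm:main}. I would also note, in the spirit of Mitsis, that the hypothesis here is weaker than that of Theorem~\ref{thm:main}: no separate ball/measure condition on $\mu$ is imposed, only the Fourier decay, at the price of replacing the true ``dimension'' of $\mu$ by the generally worse exponent $\alpha=\beta/2$.
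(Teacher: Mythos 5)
Your proposal is correct and is precisely the argument the paper intends: apply the Proposition to get $\mu(x)\lesssim|\FF|^{-\beta/2}$, i.e.\ condition \eqref{eq:A} with $\alpha=\beta/2$, then invoke Theorem \ref{thm:main} and compute $q_{n,\beta/2,\beta}=4n/\beta$. The paper gives no further detail beyond ``applying the above proposition and Theorem \ref{thm:main}'', so your write-up matches (and slightly elaborates on) its proof.
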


3. The following problem is a finite field version of a problem in Mitsis \cite{Mitsis},  Mattila \cite[Problem 20]{Mattila2004}, Mattila \cite[Chapter 3.6]{Mattila}. The following part is closely relate to Salem sets in Euclidean space, and finite field version of Salem sets. We refer to \cite[Chapter 3.6]{Mattila} for more details on Salem sets in Euclidean spaces. Iosevich and Rudnev \cite{IosevichRudnev} introduced finite field version of Salem sets, and they showed some examples in their paper. See also \cite{ChenS} for a random  construction of  Salem sets. 

\begin{question}\label{que:AD}
For any  $0<\alpha<n$, does there exist a probability  measure $\mu$ on $\FF^{n}$ such that $\mu(x)\approx |\FF|^{-\alpha}$ for all $x\in \text{Supp}(\mu)$ and $|\widehat{\mu}(\xi)|\lesssim |\FF|^{-\alpha/2}$ for all $\xi\neq 0$ ?
\end{question}

We may also have the following version for subset of $\FF^{n}$.  

\begin{question}\label{que:lll}
Let $0<\alpha<n$. Does there exist a subset $E \subset \FF^{n}$ such that $|E|\approx |\FF|^{\alpha}$ and $|\widehat{ {\bf 1}_{E}}(\xi)|\lesssim \sqrt{|E|}$ for all $\xi\neq 0$ ?
\end{question}

Recall that Theorem \ref{thm:H} claims that for any $0<\alpha<n$ there exists a subset $E\subset \FF^{n}$ such that $|E|\approx |\FF|^{\alpha}$ and 
\[
|\widehat{ {\bf 1}_{E}}(\xi)|\lesssim \sqrt{|E|} \sqrt{\log |\FF|}, \, \forall \xi \neq 0.
\] 
Thus Question \ref{que:lll} ask that can we remove the factor $\sqrt{\log |\FF|}$. We note that the above constants may depends on $n, \alpha$, but does not depend on $|\FF|$.

We note that  the answer of above questions are positive when $\alpha=n-1, n\geq 2$. Iosevich and Rudnev \cite{IosevichRudnev} proved that discrete paraboloid and discrete sphere satisfy the need for Question \ref{que:lll}, and hence Question \ref{que:AD}, for $\alpha=n-1$. It seems that $\alpha=n-1$ is the only known value such that the above questions have positive answer.


\begin{thebibliography}{}



\bibitem{Babai} L. Babai. Fourier Transforms and Equations over Finite Abelian
Groups, An introduction to the method of trigonometric sums. http://people.cs.uchicago.edu/~laci/reu02/fourier.pdf

\bibitem{BakSeeger} J.-G. Bak, A. Seeger, Extensions of the Stein-Tomas theorem, Math. Res. Lett. 18 (2011), no. 4, 767-781.

\bibitem{ChenS} C. Chen. Salem sets in vector spaces over finite fields. To appear in Ark.
Mat., available at arxiv.org/abs/1701.07958




\bibitem{ChenX} X. Chen, Sets of Salem type and sharpness of the $L^{2}$- Fourier restriction theorem, to appear in Trans.
Amer. Math. Soc., http://arxiv.org/abs/1305.5584







\bibitem{Green} B. Green, Restriction and Kakeya phonomena, Lecture notes (2003).


\bibitem{HambrookLaba} K. Hambrook, I. $\L$aba, On the sharpness of Mockenhaupt’s restriction theorem, Geom. Funct. Anal. 23 (2013), no. 4, 1262-1277.

\bibitem{HambrookLabaB} K. Hambrook, I. $\L$aba, Sharpness of the Mockenhaupt-Mitsis-Bak-Seeger Restriction Theorem in Higher Dimensions, to appear in Bull. London Math. Soc., available at arxiv.org/abs/1509.05912

\bibitem{Hayes} T. Hayes, A Large-Deviation Inequality for Vector-valued Martingales. (see
https://www.cs.unm.edu/ hayes/papers/VectorAzuma/VectorAzuma20050726.pdf)

\bibitem{IosevichRudnev} A. Iosevich and M. Rudnev, Erd\H{o}s distance problem in vector spaces over finite fields, Trans. Amer. Math. Soc.
359 (2007), 6127-6142.

\bibitem{IosevichKohp} A. Iosevich and D. Koh, Extension theorems for paraboloids in the finite field setting. Math. Z. 266, (2010) no.2 471-487.

\bibitem{IosevichKohs} A. Iosevich and D. Koh, Extension theorems for spheres in the finite field setting. Forum Math. 22 (2010), no. 3, 457-483.

\bibitem{Lewko}  M. Lewko, New Restriction Estimates for the 3-d Paraboloid over Finite Fields. Adv. Math. 270 (2015), 457-479.


\bibitem{LewkoLewko} A. Lewko and M. Lewko, Endpoint restriction estimates for the paraboloid over finite fields. Proc. Amer. Math. Soc. 140 (2012) 2013-2028.

\bibitem{Mattila2004} P. Mattila, Hausdorff dimension, projections, and the Fourier transform, Publ. Mat. 48 (2004), no. 1, 3–48.


\bibitem{Mattila} P. Mattila, Fourier analysis and Hausdorff dimension, Cambridge Studies in Advanced Mathematics,
vol. 150, Cambridge University Press, 2015.

\bibitem{Mitsis} T. Mitsis, A Stein-Tomas restriction theorem for general measures, Publ. Math. Debrecen 60 (2002), 89-99.

\bibitem{Mockenhaupt} G. Mockenhaupt, Salem sets and restriction properties of Fourier transforms, Geom. Funct. Anal. 10 (2000), 1579-1587.

\bibitem{MT} G. Mockenhaupt and T. Tao, Restriction and Kakeya phenomena for finite fields. Duke Math. J. 121 (2004), no. 1, 35-74.

\bibitem{Stein} E. Stein and R. Shakarchi, Fourier Analysis: An Introduction. Princeton and Oxford:
Princeton UP, 2003. Print. Princeton Lectures in Analysis.


\bibitem{Tao1}  T. Tao, Recent progress on the restriction conjecture, available at arxiv.org/abs/math/0311181



\bibitem{Wolff}  T.Wolff. Lectures on Harmonic Analysis, In: I. $\L$aba  and C. Shubin (eds) American Mathematical Society, Providence, RI (2003).



\end{thebibliography}
\end{document}